\title{On the intersection of tame subgroups in groups acting on trees}
\author{Konstantinos Lentzos and Mihalis Sykiotis} \makeatletter
\newcommand{\dsp}{\displaystyle}
\newtheorem{thm}{Theorem}[section]
\newtheorem{lem}[thm]{Lemma}
\newtheorem{cor}[thm]{Corollary}
\theoremstyle{definition}
\newtheorem{defi}[thm]{Definition}
\theoremstyle{remark}
\newtheorem{rem}[thm]{Remark}
\def\classification{\@ifnextchar [{\@xfootnotetext}%
{\begingroup\let\protect\noexpand\xdef\@thefnmark{}
\endgroup\@footnotetext}}
\begin{document}


\maketitle

\begin{abstract} Let $G$ be a group acting on a tree $T$ with finite
edge stabilizers of bounded order. We provide, in some very
interesting cases, upper bounds for the complexity of the
intersection $H\cap K$ of two tame subgroups $H$ and $K$ of $G$ in
terms of the complexities of $H$ and $K$. In particular, we obtain
bounds for the Kurosh rank $Kr(H\cap K)$ of the intersection in
terms of Kurosh ranks $Kr(H)$ and $Kr(K)$, in the case where $H$ and
$K$ act freely on the edges of $T$.

\end{abstract}

\section{Introduction}
In 1954, Howson \cite{Ho} showed that the intersection of two
finitely generated subgroups $H$ and $K$ of a free group $F$ is also
finitely generated and provided an upper bound for the rank $r(H\cap
K)$ of $H\cap K$ in terms of $r(H)$ and $r(K)$. The Hanna Neumann
conjecture, proved independently by Friedman \cite{Fr} and Mineyev
\cite{Mi} in 2011, says that $\overline{r}(H\cap K)\leq
\overline{r}(H)\overline{r}(K)$, where
$\overline{r}(A)=\max\{0,r(A)-1\}$ is the reduced rank of a free
group $A$.

For free products the situation is analogous. Let $\Gamma$ be a
group. The \textsl{Kurosh rank}, denoted $Kr(\Gamma)$, of a free
product decomposition $\Gamma=\ast_{i\in I}G_{i}$ of $\Gamma$ is
defined to be the number of (non-trivial) factors $G_{i}$. By the
Kurosh subgroup theorem, any subgroup $H$ of $\Gamma$ inherits a
free product decomposition $H=\ast_{j\in J}H_{j}\ast F$, where each
$H_{j}$ is non-trivial and conjugate to a subgroup of a free factor
of $\Gamma$ and $F$ is a free group. The (subgroup) Kurosh rank of
$H$ of $\Gamma$ with respect to the above splitting of $\Gamma$, is
the sum $|J|+r(F)$, which we again denote by $Kr(H)$. The reduced
Kurosh rank of $H$ is defined to be
$\overline{K}r(H)=\max\{0,Kr(H)-1\}$.

Free products also have the Howson property, in the following sense:
if $H$, $K$ are subgroups of $\Gamma$ of finite Kurosh rank, then
$H\cap K$ also has finite rank (see \cite[Theorem 2.13 (1)]{Sy} for
a proof). In \cite{Iv}, Ivanov proved that if $\Gamma$ is torsion
free, then $\overline{K}r(H\cap K)\leq 2
\overline{K}r(H)\overline{K}r(K)$. It is shown in \cite{AnMS}, that
if $\Gamma$ is right-orderable, then the coefficient $2$ can be
replaced by $1$.

The problem of finding bounds for the ``rank" of the intersection of
subgroups in free products and more generally in groups satisfying
the Howson property has also been considered in
\cite{So,BCK,Iv0,DI,DI1,Za1,Za2,ArSS}.

In this paper, we obtain, under appropriate hypotheses, bounds for
the complexity of the intersection of tame subgroups in groups
acting on trees with finite edge stabilizers.

Let $G$ be a group acting on a (simplicial) tree $T$ without
inversions. A vertex $v$ of $T$ is called \textsl{($G$-) degenerate}
if $G_{v} = G_{e}$ for some edge $e$ incident to $v$. The
corresponding vertex $[v]_{G}$ of the quotient graph $T/G$ is also
called degenerate. Let $H$ be a subgroup of $G$. We denote by
$r(T/H)$ the rank of the fundamental group of $T/H$ and by
$V_{ndeg}(T/H)$ the set of $H$-non-degenerate vertices of $T/H$. The
\textsl{complexity} $C_{T}(H)$ of $H$ with respect to $T$ is defined
to be the sum $C_{T}(H)= r(T/H) + |V_{ndeg}(T/H)|\in [0,\infty]$, if
$H$ contains hyperbolic elements, and $1$ otherwise. The
\textsl{reduced complexity} of $H$ with respect to $T$, is defined
as $\overline{C}_{T}(H)=max\{{C}_{T}(H)-1,0\}$. The subgroup $H$ of
$G$ is called \textsl{tame} if either $H$ fixes a vertex, or $H$
contains a hyperbolic element and the quotient graph $T_{H}/H$ is
finite, where $T_{H}$ is the unique minimal $H$-invariant subtree of
$T$. By \cite[Theorem 2.13]{Sy}, if each edge stabilizer is finite,
then the intersection of two tame subgroups $H$, $K$ of $G$ is again
tame. In the case where $H\cap K$ fixes a vertex, we obviously have
$\overline{C}_{T}(H\cap K)\leq \overline{C}_{T}(H)\cdot
\overline{C}_{T}(K)$.

Finitely generated subgroups are examples of tame subgroups. In the
case of free products, finite Kurosh rank implies tameness (see
Lemma \ref{lem0}) and the complexity of a non-trivial subgroup is
exactly its Kurosh rank (see section \ref{prelim} for more details).
Our first main result is the following.

\begin{restatable*}{thm}{firstThmOne}\label{ThmOne} Let $G$ be a group acting on a tree $T$
with finite quotient and finite stabilizers of edges and let $H$,
$K$ be tame subgroups of $G$ such that $H\cap K$ does not fix a
vertex of $T$.
\begin{enumerate}
\item If $T_{H}/H$ and $T_{K}/K$ do not contain degenerate vertices
of valence two, then
$$\overline{C}_{T}(H\cap K)\leq \big(6NM+12(M-1)N\big)\cdot\overline{C}_{T}(H)\cdot
\overline{C}_{T}(K),$$ where $N=\max\big\{|G_{x}\cap HK |\,:\,x\in
ET\big\}$ and $M=\max\{M_{H},M_{K}\}\leq \max\big\{|G_{x}|\,:\,x\in
ET\big\}$.

\item Suppose $H$ and $K$ satisfy the following property: for each $H$-degenerate
(resp. $K$-degenerate) vertex $v$ of $T$, the stabilizer $H_{v}$
(resp. $K_{v}$) stabilizes each edge in the star of $v$. Then
$$\overline{C}_{T}(H\cap K)\leq 6N\cdot\overline{C}_{T}(H)\cdot
\overline{C}_{T}(K).$$  In particular, if $H,K$ act freely on the
edges of $T$, then
$$\overline{K}_{T}(H\cap K)\leq
6N\cdot\overline{K}_{T}(H)\cdot \overline{K}_{T}(K).$$
\end{enumerate}
\end{restatable*}

In the special case where both $H$ and $K$ act freely on $T$, the
above inequality was proved by Zakharov in \cite{Za1}.

Now let $G=\ast_{A}G_{i}\ast F$ be the free product of the
amalgamated free product of $G_{i}$'s with a finite amalgamated
subgroup $A$ and $F$, such that $A$ is normal in each $G_{i}$.
Following Dicks and Ivanov \cite{DI}, we define
$a_{3}(G_{i}/A)=\min\big\{|\Gamma|\,:\,\Gamma \textrm{ is a subgroup
of } G_{i}/A \textrm{ with } |\Gamma|\geq 3 \big\}$ and
$\theta(G_{i}/A)=\Big\{\frac{a_{3}(G_{i}/A)}{a_{3}(G_{i}/A)-2}\Big\}\in
[1,3]$, where $\frac{\infty}{\infty-2}:=1$.

We represent $G$ as the fundamental group of a graph of groups
$(\mathcal{G},\Psi)$, where $\Psi$ is the wedge of copies of $[0,1]$
(one copy for each factor $G_{i}$) and a bouquet of circles (one for
each free generator of $F$). To each copy of $[0,1]$ and to the
wedge point we associate the group $A$, and to each circle we
associate the trivial group. To each of the remaining vertices we
associate a factor $G_{i}$. Let $T$ be the corresponding universal
tree.

\begin{restatable*}{thm}{firstThmTwo}\label{thm2} Let $G=\ast_{A}G_{i}\ast F$
be the free product of the amalgamated free product of $G_{i}$'s
with a finite amalgamated subgroup $A$ and $F$, such that $A$ is
normal in each $G_{i}$. We consider the natural action of $G$ on $T$
defined above. Suppose that $H$ and $K$ are tame subgroups (with
respect to $T$) of $G$ which act freely on the edges of $T$. Then
$H\cap K$ is tame and
$$\overline{K}_{T}(H\cap K)\leq 2\cdot \theta\cdot
N\cdot\overline{K}_{T}(H)\cdot \overline{K}_{T}(K)\leq 2\cdot \theta
\cdot |A|\cdot\overline{K}_{T}(H)\cdot \overline{K}_{T}(K)\,,$$
where $\theta=\max\{\theta(G_{i}/A)\,:\,i\in I\}$ and
$N=\max\big\{|gAg^{-1}\cap HK |\,:\,g\in G\big\}$.
\end{restatable*}

As an immediate corollary we obtain the main result of \cite{Iv}
mentioned above.

It should be noted that the arguments in the proof of Theorem
\ref{thm2}, work in a slightly more general setting as well. Thus,
with essentially the same proof, we obtain Theorem \ref{thm3} (see
also Remark \ref{rem2}): If $H$, $K$ are tame subgroups of a free
product $\ast_{A}G_{i}$ with a finite and normal amalgamated
subgroup $A$, then $\overline{C}_{T}(H\cap K)\leq 2\cdot \theta
\cdot |A\cap HK|\cdot\overline{C}_{T}(H)\cdot \overline{C}_{T}(K),$
where $\theta=\max\{\theta(G_{i}/A)\,:\,i\in I\}$ and $T$ is defined
as above for $F=1$.

After posting the first version of this paper on the arXiv, the
authors learned from A. Zakharov that he, in collaboration with S.
Ivanov, had also recently obtained (unpublished) upper bounds for
the Kurosh rank of the intersection of free product subgroups in
groups acting on trees with finite edge stabilizers.

\textbf{Acknowledgements.} We are grateful to Dimitrios Varsos for
many useful discussions and comments. We are also grateful to the
anonymous referee for careful reading of the manuscript and pointing
out a mistake in an earlier version.
\section{Preliminaries}\label{prelim}

To fix our notation, we first recall the definition of a graph.
\begin{defi} A \textsl{graph} $X$ consists of a (nonempty) set of vertices $VX$, a set of edges $EX$, a
fixed-point free involution $^{-1}:EX \rightarrow EX$ ($e\mapsto
e^{-1}$) and a map $i:EX\rightarrow VX$. The vertex $i(e)$ is called
the \textsl{initial} vertex of the edge $e$. The \textsl{terminal}
vertex $t(e)$ of $e$ is defined by $t(e)=i(e^{-1})$.
\end{defi}

Throughout, let $G$ be a group acting on a (simplicial) tree $T$
(without inversions, i.e. $ge\neq e^{-1}$ for any $g\in G$ and $e\in
EX$). By Bass-Serre theory, for which we refer to
\cite{DicksDun,Se}, this is equivalent to saying that $G$ is the
fundamental group of the corresponding graph of groups
$(\mathcal{G},T/G)$. If $x\in T$, we denote by $[x]_{G}$ the
$G$-orbit of $x$ and by $G_{x}$ its stabilizer. An element $g\in G$
is \textsl{elliptic} if it fixes a vertex of $T$ and
\textsl{hyperbolic} otherwise. If $H$ is a subgroup of $G$
containing a hyperbolic element, then there is a unique minimal
$H$-invariant subtree $T_{H}$ which is the union of the axes of the
hyperbolic elements of $H$.

We recall that a subgroup $H$ of $G$ is called \textsl{tame} if
either $H$ fixes a vertex, or $H$ contains a hyperbolic element and
the quotient graph $T_{H}/H$ is finite. By \cite[Prop. 2.2]{Sy}, the
subtree $T_{H}$ is a ``core" for the action of $H$ on $T$ in the
sense that $r(T/H)+|V_{ndeg}(T/H)| =
r(T_{H}/H)+|V_{ndeg}(T_{H}/H)|$, i.e. $C_{T}(H)=C_{T_{H}}(H)$. From
this it follows that the complexity of a tame subgroup is finite.

Finitely generated subgroups of $G$ are examples of tame subgroups,
since a finitely generated group $\Gamma$ acting by isometries on
$T$, either fixes a point of $T$ or else contains a hyperbolic
isometry and the quotient graph $T_{\Gamma}/\Gamma$ is finite.

\begin{rem}\label{rem1} We note that if the $G$-stabilizer of each edge is finite
and there is a bound on their orders, then any subgroup of $G$
consisting of elliptic elements fixes a vertex of $T$ (\cite[Lem.
2.5]{Sy}).
\end{rem}
If we restrict attention to subgroups $H$ of $G$ that act
edge-freely on $T$, then the \textsl{Kurosh rank} $K_{T}(H)$ of $H$
(with respect $T$) is defined to be the complexity $C_{T}(H)$ of
$H$.

Let $\Gamma=\ast_{i\in I}G_{i}$ be a free product and $H$ a subgroup
of $\Gamma$. By the Kurosh subgroup theorem, $H=\ast_{i\in
I,g_{i}}(H\cap g_{i}G_{i}g_{i}^{-1})\ast F$, where for each $i$,
$g_{i}$ ranges over a set of double coset representatives in
$G_{i}\backslash \Gamma/H$ and $F$ is a free group intersecting each
conjugate $gG_{i}g^{-1}$ trivially. The \textsl{(subgroup) Kurosh
rank} of $H$ with respect the above free product decomposition of
$\Gamma$, denoted by $Kr(H)$, is the sum $|\Lambda| +rank(F)$, where
$|\Lambda|$ is the number of all non-trivial factors $H\cap
g_{i}G_{i}g_{i}^{-1}$. Note that the Kurosh rank of $\Gamma$ is the
number of non-trivial factors $G_{i}$.

It is not difficult to verify that the numbers $|\Lambda|$,
$rank(F)$ depend only on $H$ and the given free product
decomposition of $\Gamma$. In fact, if $T$ is any $\Gamma$-tree
corresponding to the given decomposition of $\Gamma$, then the
Kurosh rank of $H$ with respect to $\Gamma=\ast_{i\in I}G_{i}$ is
equal to the Kurosh rank $K_{T}(H)$ of the associated free product
decomposition of $H$ coming from the action of $H$ on $T$. Thus, if
$H$ is non-trivial, then $Kr(H)=K_{T}(H)=C_{T}(H)$.
\begin{lem}\label{lem0} Let $G$ be a group acting on a tree $T$ and $H$ a
subgroup of $G$ that act edge-freely on $T$. If $K_{T}(H)<\infty$,
then $H$ is tame.
\end{lem}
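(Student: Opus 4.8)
The plan is to show that a subgroup $H$ acting edge-freely on $T$ with finite Kurosh rank $K_T(H) = C_T(H) < \infty$ satisfies the definition of tameness. Recall that tameness requires either that $H$ fixes a vertex, or that $H$ contains a hyperbolic element and the quotient $T_H/H$ is finite. So the proof naturally splits into two cases according to whether $H$ consists entirely of elliptic elements or contains a hyperbolic element.

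First I would dispose of the case where $H$ contains no hyperbolic element. If every element of $H$ is elliptic, then by definition $C_T(H) = 1$, and I need to produce a fixed vertex. This is where the edge-free hypothesis is essential: since $H$ acts freely on the edges of $T$, every edge stabilizer $H_e$ is trivial, hence finite and (trivially) of bounded order. I would then invoke Remark \ref{rem1}, which states precisely that when edge stabilizers are finite of bounded order, any subgroup consisting of elliptic elements fixes a vertex. Thus $H$ fixes a vertex and is tame in this case.

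Next, in the case where $H$ contains a hyperbolic element, the unique minimal $H$-invariant subtree $T_H$ exists. Here I would use the core property from \cite[Prop. 2.2]{Sy} cited in the excerpt, namely $C_T(H) = C_{T_H}(H) = r(T_H/H) + |V_{ndeg}(T_H/H)|$. The hypothesis gives $r(T_H/H) + |V_{ndeg}(T_H/H)| < \infty$, so in particular $r(T_H/H) < \infty$ and the number of non-degenerate vertices of $T_H/H$ is finite. The goal is to upgrade this to the statement that the entire quotient graph $T_H/H$ is finite. The finiteness of the rank of $\pi_1(T_H/H)$ controls the first Betti number but not by itself the total number of vertices and edges; the minimality of $T_H$ is what prevents the quotient from having infinite ``hanging'' trees or rays.

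The hard part will be controlling the degenerate vertices and the overall finiteness of $T_H/H$ from the two finite quantities $r(T_H/H)$ and $|V_{ndeg}(T_H/H)|$. The key mechanism is minimality: because $T_H$ is the minimal invariant subtree, it has no vertices of valence one, so $T_H/H$ has no valence-one vertices either, and hence no finite ``hanging trees.'' A graph of finite rank with no vertices of valence one is a finite union of a core (finitely many edges) together with embedded lines or rays; minimality again rules out infinite rays. The remaining worry is long chains of degenerate vertices of valence two, which could \emph{a priori} be infinite even with finite rank and finite non-degenerate vertex set. Since the edge stabilizers are all trivial (edge-free action), a degenerate vertex $v$ satisfies $H_v = H_e = 1$ for an incident edge $e$, forcing $H_v = 1$; I expect that such trivially-stabilized valence-two vertices can be suppressed (the subtree $T_H$ is simplicial and $H$ acts freely on edges, so the structure of chains of degenerate vertices is rigidly tied to the free-group structure coming from the Kurosh decomposition). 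I would therefore argue that finiteness of $r(T_H/H) + |V_{ndeg}(T_H/H)|$ together with minimality and the absence of valence-one vertices forces $T_H/H$ to be a finite graph, whence $H$ is tame by definition.
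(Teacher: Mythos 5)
Your reduction to the case where $H$ contains a hyperbolic element is fine and agrees with the paper: when $H$ is purely elliptic its edge stabilizers are trivial, so Remark \ref{rem1} produces a fixed vertex. The gap is in the main case. You correctly isolate the crux --- that $r(T_H/H)+|V_{ndeg}(T_H/H)|<\infty$ does not by itself bound the size of $T_H/H$, because of possible infinite configurations of degenerate valence-two vertices --- but you never close it. The operative sentences are ``I expect that such trivially-stabilized valence-two vertices can be suppressed'' and ``I would therefore argue that \ldots forces $T_H/H$ to be a finite graph''; these are statements of intent, not arguments. In particular, the claim that minimality of $T_H$ excludes infinite rays (and, more generally, infinite leafless trees of degenerate vertices hanging off a finite core, which are compatible with finite rank, no valence-one vertices, and finitely many non-degenerate vertices) is precisely what requires proof, and it is the entire content of the lemma in this case.

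The paper closes this gap by arguing top-down rather than bottom-up. It builds a finite subgraph $Y$ of $T/H$ containing the finitely many edges outside a maximal tree, the finitely many vertices with non-trivial $H$-stabilizer (non-trivial stabilizer forces non-degeneracy here, by edge-freeness), and the geodesics in the maximal tree joining all of these. It then shows $\pi^{-1}(Y)$ is connected: a reduced path in $T$ between vertices of $\pi^{-1}(Y)$ using no edge of $\pi^{-1}(Y)$ would project into a component of $(T/H)\setminus Y$, which is a tree meeting $Y$ in a single vertex, forcing two consecutive edges $x_i$ and $x_{i+1}^{-1}$ into the same $H$-orbit; the element $h$ realizing this fixes a vertex not lying over $\{v_1,\ldots,v_n\}$, hence is trivial by edge-freeness, contradicting reducedness. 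Since $\pi^{-1}(Y)$ is then a connected $H$-invariant subgraph, minimality gives $T_H\subseteq\pi^{-1}(Y)$, so $T_H/H$ is a subgraph of the finite graph $Y$. If you want to rescue your direct structural analysis of $T_H/H$, you would need to prove, for instance, that every edge of $T_H$ lies on an axis whose image in $T_H/H$ is finite and then control how these finite images assemble; the paper's construction of $Y$ sidesteps all of this.
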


\begin{proof} It suffices to consider the case when $H$ contains a
hyperbolic element. Let $\pi:T\rightarrow T/H$ be the natural
projection given by $\pi(x)=[x]_{H}$. Since $K_{T}(H)<\infty$, there
are finitely many vertices $v_{1},\ldots,v_{n}$ of $T/H$ with
non-trivial group and finitely many edges $e_{1},\ldots,e_{m}$ of
$T/H$ such that $X=T/H\setminus \{e_{1},\ldots,e_{m}\}$ is a maximal
tree of $T/H$. Let $Y$ be the finite subgraph of $T/H$ consisting of
$\{e_{1},\ldots,e_{m}\}$ and all geodesics in $X$ between endpoints
of the $e_{i}'s$ and $v_{1},\ldots,v_{n}$. We claim that
$\pi^{-1}(Y)$ is connected. To see this, let $p=x_{1}\cdots x_{k}$
be a reduced path connecting vertices of $\pi^{-1}(Y)$ such that no
edge of $p$ lies in $\pi^{-1}(Y)$. Then $\pi(p)$ is contained in the
complement $T/H\setminus Y$ of $Y$. Since $Y$ contains the edges
$e_{1},\ldots,e_{m}$, each component $C$ of $T/H\setminus Y$ is a
tree, and it is not difficult to see that $C$ intersects
$T/H\setminus Y$ in only one vertex. It follows that there is an
index $i$ such that $\pi(x_{i})=\pi(x_{i+1})^{-1}$. This means that
$hx_{i}=x_{i+1}^{-1}$ for some $h\in H$ and hence $h$ fixes the
initial vertex $v$ of $x_{i}$. From the construction of $Y$, $v$ is
degenerate and therefore $h=1$, which contradicts the choice of $p$.

Thus, $\pi^{-1}(Y)$ is a connected $H$-invariant subgraph of $T$. It
follows that $T_{H}\subseteq \pi^{-1}(Y)$. We conclude that
$T_{H}/H$ is finite, being a subgraph of $Y$.
\end{proof}

\section{Proofs of the main results}

Let $Y$ be a graph and $v$ a vertex of $Y$. The \textsl{star} of
$v$, denoted $Star_{Y}(v)$, is the set of edges of $Y$ with initial
vertex $v$, i.e. $Star_{Y}(v)=\{e\in EY\,|\,i(e)=v\}$. The
\textsl{valence} or \textsl{degree} of $v$ in $Y$, denoted
$\deg_{Y}(v)$, is the number of edges in the star of $v$.

\begin{lem}\label{lem1} Let $G$ be a group acting on a tree $T$,
let $H$ be a tame subgroup of $G$ containing hyperbolic elements and
let $\widetilde{X}$ be the graph obtained from $X=T_{H}/H$ by
attaching a loop at each $H$-non-degenerate vertex. Then
\[\overline{C}_{T}(H)=\overline{r}(\widetilde{X})=\frac{1}{2}\sum\big(\deg_{\widetilde{X}}([v]_{H})-2\big)\,,\]
where the sum is taken over all vertices $[v]_{H}$ of
$\widetilde{X}$.

\end{lem}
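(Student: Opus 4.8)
The plan is to express both quantities in the statement through the first Betti number of the finite connected graph $\widetilde{X}$, and then to verify the single inequality that keeps the reduced rank from collapsing to $0$.

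First I would use that $T_{H}$ is a core for the action of $H$, so that $C_{T}(H)=C_{T_{H}}(H)=r(X)+|V_{ndeg}(X)|$, where $X=T_{H}/H$; this graph is finite because $H$ is tame. Now $\widetilde{X}$ is obtained from the connected graph $X$ by attaching one loop at each of the $|V_{ndeg}(X)|$ non-degenerate vertices, and attaching a loop to a connected graph raises the rank of the fundamental group by exactly one while leaving the vertex set unchanged. Hence $r(\widetilde{X})=r(X)+|V_{ndeg}(X)|=C_{T}(H)$, and applying $\max\{\,\cdot-1,0\}$ to both sides yields the first equality $\overline{C}_{T}(H)=\overline{r}(\widetilde{X})$.

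For the combinatorial identity I would count oriented edges by their initial vertices. If $\widetilde{X}$ has $V$ vertices and $\mathcal{E}$ geometric edges, then $\sum\deg_{\widetilde{X}}([v]_{H})=2\mathcal{E}$ and $r(\widetilde{X})=\mathcal{E}-V+1$, so
\[\frac{1}{2}\sum\big(\deg_{\widetilde{X}}([v]_{H})-2\big)=\mathcal{E}-V=r(\widetilde{X})-1\,.\]
This equals $\overline{r}(\widetilde{X})=\max\{r(\widetilde{X})-1,0\}$ precisely when $r(\widetilde{X})\geq 1$, so it remains to rule out the possibility that $\widetilde{X}$ is a tree.

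The main obstacle is exactly this last point, since a priori $X$ could be a tree all of whose vertices are degenerate (note that having a hyperbolic element does \emph{not} by itself force $X$ to contain a cycle, as the case of an amalgam over a proper subgroup shows). Suppose $\widetilde{X}$ were a tree; then $X$ is a finite tree and every vertex of $X$ is $H$-degenerate. As $X$ is not a single vertex (otherwise $H$ would fix a vertex of $T$, contradicting the presence of a hyperbolic element), it has a leaf $[v]_{H}$ with a unique incident edge $[e]_{H}$, and by degeneracy $H_{v}=H_{e}$. Lifting to $T_{H}$, the edges in $Star_{T_{H}}(v)$ all project onto $[e]_{H}$, and two of them coincide under the projection exactly when they lie in the same $H_{v}$-orbit; hence $Star_{T_{H}}(v)$ is a single $H_{v}$-orbit, and since $\mathrm{Stab}_{H_{v}}(e)=H_{e}$, the orbit-stabilizer theorem gives $|Star_{T_{H}}(v)|=[H_{v}:H_{e}]=1$. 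Thus $v$ is a valence-one vertex of $T_{H}$, so removing the $H$-orbit of $v$ together with its edge produces a proper $H$-invariant subtree of the infinite tree $T_{H}$, contradicting the minimality of $T_{H}$. Therefore $r(\widetilde{X})\geq 1$, and the second equality follows.
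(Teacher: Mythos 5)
Your proof is correct and follows essentially the same route as the paper's: identify $\overline{C}_{T}(H)$ with $\overline{r}(\widetilde{X})$ via the core property of $T_{H}$, compute $\frac{1}{2}\sum(\deg_{\widetilde{X}}-2)=\mathcal{E}-V$, and use minimality of $T_{H}$ to rule out the degenerate case. The only cosmetic difference is that the paper shows directly that every valence-one vertex of $X$ is non-degenerate (so every vertex of $\widetilde{X}$ has degree at least two), whereas you apply the same leaf-lifting argument contrapositively just to exclude the case that $\widetilde{X}$ is a tree; the key step is identical.
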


\begin{proof} The reduced rank
of a graph is equal to the number of its (geometric-oriented) edges
minus the number of its vertices. The minimality of $T_{H}$ implies
that each vertex of $X$ of valence one is $H$-non-degenerate.
Therefore, every vertex of $\widetilde{X}$ has valence at least two.
Now an easy calculation shows that the sum
$\sum\big(\deg_{\widetilde{X}}([v]_{H})-2\big)$, over all vertices
$[v]_{H}$ of $\widetilde{X}$, is equal to
2$\overline{r}(\widetilde{X})$. By construction of $\widetilde{X}$,
we have $\overline{r}(\widetilde{X})=\overline{C}_{T}(H)$ which
completes the proof.
\end{proof}

\begin{lem}\label{lem2} Let $G$ be a group acting on a tree $T$ and let $A$ and $B$ be
subgroups of $G$ such that $A\subseteq B$. Suppose that $A$ and $B$
contain hyperbolic elements and that $v$ is a vertex of $T_{B}$. We
consider the graph map $\pi_{B}:T_{A}/A \longrightarrow T_{B}/B$
given by $\pi([x]_{A})=[x]_{B}$.
\begin{enumerate}
\item $|Star([v]_{A})|\leq |G_{v}\cap
B|\cdot |Star([v]_{B})|$ (provided that they are finite).
\item If, moreover, $B_{v}$ is $B$-degenerate and stabilizes each edge in $Star_{T_{B}}(v)$, then the
restriction $\pi_{B}: Star([v]_{A}) \longrightarrow Star([v]_{B})$
is an embedding.
\end{enumerate}
\end{lem}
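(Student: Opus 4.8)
The plan is to understand the map $\pi_B$ on stars by analyzing how edges incident to $v$ in $T_A$ and $T_B$ project to the quotient graphs. Recall that $Star([v]_A)$ consists of the $A_v$-orbits of edges in $Star_{T_A}(v)$, while $Star([v]_B)$ consists of the $B_v$-orbits of edges in $Star_{T_B}(v)$. The key observation is that since $A \subseteq B$ and $T_A \subseteq T_B$, every edge $e \in Star_{T_A}(v)$ lies in $Star_{T_B}(v)$, so the map $\pi_B$ is well defined on stars, sending the $A_v$-orbit $[e]_A$ to the $B_v$-orbit $[e]_B$. First I would set up this orbit description carefully, noting that $A_v = G_v \cap A$ and $B_v = G_v \cap B$.

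For part (1), I would estimate the number of $A_v$-orbits mapping into a single $B_v$-orbit. If $[e]_B = [f]_B$ for two edges $e, f \in Star_{T_A}(v)$, then $f = b e$ for some $b \in B_v = G_v \cap B$; conversely each such $b$ moves $e$ to another edge in the same fiber. Thus the fiber over a given $B_v$-orbit is a union of $A_v$-orbits inside a single $B_v$-orbit, and the number of such $A_v$-orbits is bounded by the number of $A_v$-orbits on a $B_v$-orbit, which is at most $|B_v/A_v| \leq |B_v| = |G_v \cap B|$. Summing over the (at most $|Star([v]_B)|$) orbits in the image yields $|Star([v]_A)| \leq |G_v \cap B| \cdot |Star([v]_B)|$, giving the stated bound. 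I would phrase this as a counting argument via the orbit-fiber correspondence.

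For part (2), the extra hypothesis is that $B_v$ (the stabilizer $G_v \cap B$) is $B$-degenerate and fixes every edge in $Star_{T_B}(v)$. I would argue that $\pi_B$ is injective on $Star([v]_A)$ by showing that distinct $A_v$-orbits map to distinct $B_v$-orbits. Suppose $e, f \in Star_{T_A}(v)$ satisfy $[e]_B = [f]_B$, so $f = b e$ with $b \in B_v$. Since $B_v$ fixes every edge in $Star_{T_B}(v)$ by hypothesis, and $e \in Star_{T_A}(v) \subseteq Star_{T_B}(v)$, we get $b e = e$, hence $f = e$, so in particular $[e]_A = [f]_A$. This shows the fibers are singletons and $\pi_B$ is injective on stars, i.e.\ an embedding.

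The main obstacle I anticipate is the bookkeeping around orbit sizes in part (1): one must be careful that the edges of $Star([v]_A)$ are indexed by $A_v$-orbits rather than by individual edges, and that an edge of $T_A$ incident to $v$ genuinely remains incident to $v$ in $T_B$ (which holds since $T_A \subseteq T_B$ and $v \in T_A \subseteq T_B$). I would also verify that the index $[B_v : A_v]$ is the correct bound on the number of $A_v$-orbits inside one $B_v$-orbit; this is an elementary fact about a group acting transitively, where the orbits of a subgroup correspond to cosets of a point-stabilizer, but in the worst case (trivial point stabilizers) this index equals $|B_v/A_v| \leq |B_v|$, which is what feeds the inequality. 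Part (2) is comparatively immediate once the degeneracy hypothesis is invoked to collapse the fibers.
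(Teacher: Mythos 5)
Your proposal is correct and follows essentially the same route as the paper: both bound each fiber of $\pi_B$ on stars by $|G_v\cap B|$ (you via counting $A_v$-orbits inside a single $B_v$-orbit, the paper via an explicit injection of the fiber into $B_v$ using elements $a_1^{-1}ba_2$), and both deduce part (2) by noting that the edge-fixing hypothesis forces $be=e$ and hence collapses each fiber to a single $A$-orbit. Your normalization of representatives to $Star_{T_A}(v)\subseteq Star_{T_B}(v)$ is a clean repackaging of the paper's element-chasing, not a different argument.
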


\begin{proof}
Suppose that $[e_{1}]_{A}$ and $[e_{2}]_{A}$ are two edges in the
star of $[v]_{A}$ with $\pi([e_{1}]_{A})=\pi([e_{2}]_{A})$. Then
there are $a_{1},a_{2}\in A$ and $b\in B$ such that
$i(e_{1})=a_{1}v$, $i(e_{2})=a_{2}v$ and $e_{1}=be_{2}$. It follows
that $i(e_{1})=bi(e_{2})$ and thus $a_{1}v=ba_{2}v$. Hence
$a_{1}^{-1}ba_{2}\in G_{v}\cap B=B_{v}$. Now, if $[x]_{A}$ is an
edge in the star of $[v]_{A}$ with
$\pi([x]_{A})=\pi([e_{1}]_{A})=\pi([e_{2}]_{A})$, then as before
$i(x)=a_{x}v$ and $x=b_{x}e_{2}$ for some $a_{x}\in A$ and $b_{x}\in
B$. If we assume further that
$a_{1}^{-1}ba_{2}=a_{x}^{-1}b_{x}a_{2}$, then
$a_{1}^{-1}b=a_{x}^{-1}b_{x}$ and so
$[e_{1}]_{A}=[be_{2}]_{A}=[a_{1}a_{x}^{-1}b_{x}e_{2}]_{A}=[b_{x}e_{2}]_{A}=[x]_{A}$.
This means that each fiber of the restriction (on stars) has at most
$|G_{v}\cap B|$ elements, and the first assertion follows.

Now, if $B_{v}$ stabilizes each edge in $Star_{T_{B}}(v)$, then
$a_{1}^{-1}ba_{2}$ stabilizes $a_{2}^{-1}e_{2}$ and therefore
$[e_{1}]_{A}=[be_{2}]_{A}=[a_{1}a_{2}^{-1}e_{2}]_{A}=[e_{2}]_{A}$.
\end{proof}
In view of this lemma, we define $M_{B}:=\max\{|G_{v}\cap B|\,:\,v
\textrm{ is a B-degenerate vertex of T }\}$.

The following is our first main result.

\firstThmOne
\begin{proof} Since $H\cap K$ does not fix a vertex, it
follows from Remark \ref{rem1} that $H\cap K$, $H$ and $K$ contain
hyperbolic elements. Let $T_{H\cap K}$, $T_{H}$, $T_{K}$ be the
minimal subtrees of $T$ invariant under $H\cap K$, $H$, $K$,
respectively. Let $\pi_{H}:T_{H\cap K}/H\cap K \longrightarrow
T_{H}/H$ and $\pi_{K}:T_{H\cap K}/H\cap K \longrightarrow T_{K}/K$
be the natural projections (defined as in Lemma \ref{lem2}). We
consider the map $\pi=(\pi_{H},\pi_{K}):T_{H\cap K}/H\cap K
\longrightarrow T_{H}/H \times T_{K}/K$ given by $\pi([x]_{H\cap
K})=([x]_{H},[x]_{K})$. By \cite[Proposition 8.7]{Ba}, each fiber
$\pi^{-1}([x]_{H},[x]_{K})$, where $x$ is an edge or a vertex, has
exactly $|H_{x}\backslash G_{x}\cap HK / K_{x}|$ elements. It
follows that for each edge $x$ the fiber $\pi^{-1}([x]_{H},[x]_{K})$
has at most $N$ elements.

For convenience we simplify notation by setting $X=T_{H\cap K}/H\cap
K$, $Y=T_{H}/H$ and $Z=T_{K}/K$. As in Lemma \ref{lem1}, we
construct graphs $\widetilde{X}$, $\widetilde{Y}$ and
$\widetilde{Z}$, by attaching a loop at each non-degenerate vertex
of $X$, $Y$ and $Z$, respectively.

1) By Lemma \ref{lem1}, it suffices to show that
\begin{align}\label{ineq1}
\sum_{V\widetilde{X}}\big(\deg_{\widetilde{X}}([v]_{H\cap
K})-2\big)& \leq \big(3NM+6N(M-1)\big)
\sum_{V\widetilde{Y}}\big(\deg_{\widetilde{Y}}([v]_{H})-2\big) \cdot
\sum_{V\widetilde{Z}}\big(\deg_{\widetilde{Z}}([v]_{K})-2\big).
\end{align}
 For any pair of vertices $(a,b)\in Y\times Z$, we will show that
\begin{align}\label{ineq2} \sum_{v\in
\pi^{-1}(a,b)}\big(\deg_{\widetilde{X}}(v)-2\big)&\leq
\big(3NM+6N(M-1)\big)\cdot \big(\deg_{\widetilde{Y}}(a)-2\big) \cdot
\big(\deg_{\widetilde{Z}}(b)-2\big)
\end{align}
from which (\ref{ineq1}) follows. The rest of the proof follows
similar arguments to those given in \cite{BCK}, \cite{DI} and
\cite{Iv}. Let $\{v_{1},\ldots, v_{n}\}$ be the vertices of
$\pi^{-1}(a,b)$. Since the fiber of any edge of $Y\times Z$ contains
at most $N$ edges, we have
\begin{equation}\label{ineq3}
 \sum_{i=1}^{n} \deg_{X}(v_{i}) \leq N \cdot \deg_{Y}(a) \cdot
 \deg_{Z}(b).
\end{equation}

We consider three cases depending on whether or not $a$ and $b$ are
degenerate. \\
\textbf{Case 1.} Suppose that $a$ is $H$-non-degenerate and $b$ is
$K$-non-degenerate. Then $\deg_{Y}(a) =\deg_{\widetilde{Y}}(a) -2$,
$\deg_{Z}(b) = \deg_{\widetilde{Z}}(b) -2$ while $\deg_{X}(v_{i})$
is equal to $\deg_{\widetilde{X}}(v_{i}) -2$ or
$\deg_{\widetilde{X}}(v_{i})$. Hence
$$\sum_{i=1}^{n} \big(\deg_{\widetilde{X}}(v_{i})-2\big) \leq  \dsp \sum_{i=1}^{n} \deg_{X}(v_{i})
\leq  N \cdot \deg_{Y}(a)\cdot\deg_{Z}(b)= N\cdot
\big(\deg_{\widetilde{Y}}(a) -2\big)\cdot
\big(\deg_{\widetilde{Z}}(b) -2\big)\,.
$$
\textbf{Case 2.} Exactly one of $a$, $b$, say $b$, is degenerate.
Then each $v_{i}$ is $(H\cap K)$-degenerate as well, and thus
$\deg_{Y}(a) =\deg_{\widetilde{Y}}(a) -2$, $\deg_{X}(v_{i}) =
\deg_{\widetilde{X}}(v_{i})$ and
$\deg_{\widetilde{Z}}(b)=\deg_{Z}(b)
>2$. Also, by Lemma \ref{lem2}, for each $i$
we have $\deg_{X}(v_{i})\leq M\deg_{Z}(b)$.

If $n\leq N\cdot \deg_{Y}(a)$, then
\begin{align}
\sum_{i=1}^{n}
\big(\deg_{\widetilde{X}}(v_{i})-2\big)&=\sum_{i=1}^{n}\big(\deg_{X}(v_{i})-2\big)
\leq n\cdot \big(M\deg_{Z}(b)-2\big)\leq N\cdot \deg_{Y}(a) \big(M\deg_{Z}(b)-2\big)\nonumber \\
 &\leq N\cdot \deg_{Y}(a)\Big(M\big(\deg_{Z}(b)-2\big)+2(M-1)\Big)\nonumber \\
 & = NM
\big(\deg_{\widetilde{Y}}(a) -2\big) \big(\deg_{\widetilde{Z}}(b)
-2\big)+2N(M-1)\big(\deg_{\widetilde{Y}}(a) -2\big)\label{2a}\\
&\leq \big(NM+2N(M-1)\big)\cdot \big(\deg_{\widetilde{Y}}(a)
-2\big)\cdot \big(\deg_{\widetilde{Z}}(b) -2\big),\nonumber
 \end{align}
where the last inequality follows because
$\deg_{\widetilde{Z}}(b)>2$.

On the other hand, if $n\geq N\cdot \deg_{Y}(a)$, then
\begin{align}
\sum_{i=1}^{n}
\big(\deg_{\widetilde{X}}(v_{i})-2\big)&=\sum_{i=1}^{n}\deg_{X}(v_{i})-2n
\leq N\cdot \deg_{Y}(a)\cdot \deg_{Z}(b)-2N \cdot \deg_{Y}(a) \nonumber\\
 &=
 N \deg_{Y}(a)\cdot\big(\deg_{Z}(b)-2\big)
 = N
\big(\deg_{\widetilde{Y}}(a) -2\big)\big(\deg_{\widetilde{Z}}(b)
-2\big)\label{2b}.
\end{align}
\textbf{Case 3.} Finally, suppose that $a$, $b$ are degenerate in
$Y$, $Z$, respectively. Then each vertex $v_{i}$ is $(H\cap
K)$-degenerate as well and $\deg_{\widetilde{Y}}(a)=\deg_{Y}(a)
>2$, $\deg_{\widetilde{X}}(v_{i})=\deg_{X}(v_{i})$,
$\deg_{\widetilde{Z}}(b)=\deg_{Z}(b)>2$. Moreover, by Lemma
\ref{lem2}, $\deg_{X}(v_{i})\leq \min\{M\deg_{Y}(a),M\deg_{Z}(b)
\}$. Suppose that $\deg_{Z}(b)=\min\{\deg_{Y}(a),\deg_{Z}(b) \}$ and
hence $\deg_{Y}(a)=\max\{\deg_{Y}(a),\deg_{Z}(b) \}$ (the other case
is handled in the same way).

If $n\leq N\cdot \deg_{Y}(a)$, then
$$\sum_{i=1}^{n}
\big(\deg_{\widetilde{X}}(v_{i})-2\big)
=\sum_{i=1}^{n}\big(\deg_{X}(v_{i})-2\big) \leq n\cdot
\big(M\deg_{Z}(b)-2\big)\leq N\cdot \deg_{Y}(a)\cdot
\big(M\deg_{Z}(b)-2\big).$$

On the other hand, if $n\geq N\cdot \deg_{Y}(a)$, then
$$\sum_{i=1}^{n}
\big(\deg_{\widetilde{X}}(v_{i})-2\big)
=\sum_{i=1}^{n}\deg_{X}(v_{i})-2n \leq N\cdot \deg_{Y}(a)\cdot
\deg_{Z}(b)-2N\cdot \deg_{Y}(a) \leq N\cdot \deg_{Y}(a)\cdot
\big(\deg_{Z}(b)-2\big)\,.$$

Thus, in each case we have
\begin{equation}\label{3a}\sum_{i=1}^{n}
\big(\deg_{\widetilde{X}}(v_{i})-2\big)\leq N\cdot \deg_{Y}(a)\cdot
\big(M\deg_{Z}(b)-2\big)\,.\end{equation}
Since $\deg_{Y}(a)\geq
3$, or equivalently, $\deg_{Y}(a)\leq 3\big(\deg_{Y}(a)-2\big)$, it
follows that
\begin{align}
\sum_{i=1}^{n}
\big(\deg_{\widetilde{X}}(v_{i})-2\big)&\leq 3N\cdot
\big(\deg_{Y}(a)-2\big)\cdot
\big(M\deg_{Z}(b)-2\big)\nonumber\\
&\leq 3N\cdot
\big(\deg_{Y}(a)-2\big)\cdot\Big(M\big(\deg_{Z}(b)-2\big)+2(M-1)\Big)\nonumber \\
&= 3NM\cdot \big(\deg_{Y}(a)-2\big)\cdot
\big(\deg_{Z}(b)-2\big)+6N(M-1)\cdot \big(\deg_{Y}(a)-2\big)\nonumber\\
&\leq\big(3NM+6N(M-1)\big)\cdot
\big(\deg_{\widetilde{Y}}(a)-2\big)\cdot
\big(\deg_{\widetilde{Z}}(b)-2\big)\label{3b}.
\end{align}
This completes the proof of part 1) of the theorem.

2) To prove the second part, again by Lemma \ref{lem1}, it suffices
to show that
\begin{align}\label{ineq5} \sum_{v\in
\pi^{-1}(a,b)}\big(\deg_{\widetilde{X}}(v)-2\big)&\leq 3N
\big(\deg_{\widetilde{Y}}(a)-2\big) \cdot
\big(\deg_{\widetilde{Z}}(b)-2\big),
\end{align}
for each pair of vertices $(a,b)\in Y\times Z$. Proceeding exactly
as before, we distinguish three cases. In Case 1, where both $a$ and
$b$ are non-degenerate, we get the same inequality. In Cases 2 and
3, by Lemma \ref{lem2} (2), we can now use $1$ instead of $M$. Thus
in Cases 2 and 3, we obtain respectively (from \ref{2a}-\ref{2b} and
\ref{3b}) the inequalities
\begin{align}\sum_{i=1}^{n}\big(\deg_{\widetilde{X}}(v_{i})-2\big)&\leq N
\big(\deg_{\widetilde{Y}}(a)-2\big) \cdot
\big(\deg_{\widetilde{Z}}(b)-2\big)
\end{align}
and
\begin{align}\sum_{i=1}^{n}\big(\deg_{\widetilde{X}}(v_{i})-2\big)&\leq 3N
\big(\deg_{\widetilde{Y}}(a)-2\big) \cdot
\big(\deg_{\widetilde{Z}}(b)-2\big).
\end{align}
It remains only to consider the case when both $a$ and $b$ are
degenerate (in which case we are in Case 3) and $\deg_{Y}(a)=2$,
where $a$ is the vertex of maximal degree. If $\deg_{Y}(a)=2$, then
$\deg_{Z}(b)=2$ too, and inequality \ref{ineq5} follows since, by
Lemma \ref{lem2} (2),
$\deg_{\widetilde{X}}(v_{i})=\deg_{X}(v_{i})\leq
\min\{\deg_{Y}(a),\deg_{Z}(b) \}$ for each $i$.
\end{proof}

\begin{cor}(\cite[Theorem 1]{Za1})
Let $G$ be a group acting on a tree $T$ with finite quotient and
finite stabilizers of edges and let $H$, $K$ be finitely generated
subgroups of $G$ which intersect trivially each vertex stabilizer
(and hence they are free groups). Then $H\cap K$ is finitely
generated and
$$\overline{r}(H\cap K)\leq 6N\cdot\overline{r}(H)\cdot
\overline{r}(K),$$ where $N=\max\big\{|G_{x}\cap HK |\,:\,x\in
ET\big\}$.
\end{cor}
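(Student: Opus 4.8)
The plan is to derive this corollary directly from part (2) of Theorem \ref{ThmOne}, so the main task is to verify that its hypotheses are satisfied and that the conclusion specializes correctly. First I would observe that since $H$ and $K$ intersect each vertex stabilizer trivially, they act freely on $T$ (in particular edge-freely), so they are free groups and every vertex of $T$ is $H$-degenerate and $K$-degenerate in the trivial-stabilizer sense. Indeed, for such subgroups $H_{v}=1$ for every vertex $v$, so the condition in part (2) that $H_{v}$ (resp. $K_{v}$) stabilize each edge in the star of $v$ holds vacuously. Thus the hypotheses of Theorem \ref{ThmOne}(2) are met.

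Next I would address tameness and the passage from complexity to rank. Finitely generated subgroups are tame by the remarks in Section \ref{prelim}, so $H$ and $K$ are tame. Since $H\cap K$ intersects each vertex stabilizer trivially (being contained in both $H$ and $K$), it too acts freely; I would invoke Remark \ref{rem1} together with the finiteness of edge stabilizers to conclude that if $H\cap K$ consisted only of elliptic elements it would fix a vertex, contradicting nontriviality unless $H\cap K=1$, in which case the inequality is trivial. Assuming $H\cap K$ does not fix a vertex, Theorem \ref{ThmOne}(2) applies and yields
\[
\overline{K}_{T}(H\cap K)\leq 6N\cdot\overline{K}_{T}(H)\cdot\overline{K}_{T}(K).
\]

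Finally I would translate Kurosh rank into ordinary rank. Because $H$, $K$ and $H\cap K$ act freely on $T$ (so in particular edge-freely), the Kurosh rank $K_{T}$ coincides with the complexity $C_{T}$ by definition, and for a subgroup acting freely on a tree the quotient graph carries the trivial graph of groups, so $C_{T}$ reduces to the rank of the fundamental group of the quotient graph, i.e. the ordinary rank of the free group. Hence $\overline{K}_{T}(H\cap K)=\overline{r}(H\cap K)$ and similarly for $H$ and $K$, giving the stated bound $\overline{r}(H\cap K)\leq 6N\cdot\overline{r}(H)\cdot\overline{r}(K)$. The Howson-type finiteness (that $H\cap K$ is finitely generated) follows because its complexity is finite, being bounded by a finite quantity; equivalently $H\cap K$ is tame with finite quotient, so its fundamental group has finite rank.

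The step I expect to require the most care is confirming that the hypotheses of part (2) genuinely hold under the weaker-looking assumption ``intersect trivially each vertex stabilizer'': one must check that \emph{every} vertex is degenerate for $H$ and $K$ and that the star-stabilization condition is vacuous, rather than assuming it. Once this is pinned down, the rest is bookkeeping relating $\overline{K}_{T}$ and $\overline{r}$, which is immediate from the definitions in Section \ref{prelim}.
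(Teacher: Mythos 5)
Your proposal is correct and matches the paper's intent: the corollary is stated there without a separate proof precisely because it is the specialization of Theorem \ref{ThmOne}(2) to subgroups meeting every vertex stabilizer trivially, which is exactly the reduction you carry out. Your verifications (the star-stabilization hypothesis holds vacuously since $H_v=K_v=1$, the trivial-intersection case is handled separately, and $\overline{K}_T=\overline{C}_T=\overline{r}$ for free actions) are all accurate and fill in the bookkeeping the paper leaves implicit.
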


\begin{cor} Let $G$ be a group acting on a tree $T$ with finite
quotient, finite stabilizers of edges and infinite vertex
stabilizers. If $H$ and $K$ are subgroups of finite index in $G$,
then $$\overline{C}_{T}(H\cap K)\leq 2N\cdot\overline{C}_{T}(H)\cdot
\overline{C}_{T}(K).$$
\end{cor}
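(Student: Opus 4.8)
The plan is to reduce everything to Theorem \ref{ThmOne} and to exploit the fact that, under the present hypotheses, the only case that can occur in its proof is the most favorable one. First I would record the structural consequences of finite index. For $L\in\{H,K,H\cap K\}$ we have $[G:L]<\infty$, so $T/L$ is finite and $L$ is tame, and for every vertex $v$ of $T$ the stabilizer $L_{v}=L\cap G_{v}$ has finite index in the infinite group $G_{v}$, hence is itself infinite. Since every edge stabilizer $G_{e}$, and a fortiori every $L_{e}$, is finite, we get $L_{v}\neq L_{e}$ for each edge $e$ incident to $v$; that is, no vertex of $T$ is $L$-degenerate. Thus $V_{ndeg}(T_{L}/L)$ is the whole vertex set of $T_{L}/L$ for each of the three subgroups.

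Next I would verify that Theorem \ref{ThmOne} genuinely applies, i.e. that $H\cap K$ does not fix a vertex. The group $H\cap K$ is infinite, being of finite index in $G$, so it cannot lie in a finite edge stabilizer. If instead it fixed a vertex $v$, then $H\cap K\subseteq G_{v}$ would force $[G:G_{v}]\leq[G:H\cap K]<\infty$, making the orbit $Gv$ finite; but a group acting without inversions on the (infinite Bass--Serre) tree $T$ with a finite vertex orbit fixes a vertex or an edge, which is incompatible with $G$ containing hyperbolic elements. By Remark \ref{rem1} the same reasoning shows that $H$, $K$ and $H\cap K$ all contain hyperbolic elements, so the minimal subtrees $T_{H},T_{K},T_{H\cap K}$ and the projection $\pi=(\pi_{H},\pi_{K})$ are defined exactly as in the proof of Theorem \ref{ThmOne}.

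The heart of the argument is then that, since no vertex of $Y=T_{H}/H$ or $Z=T_{K}/K$ is degenerate, every pair $(a,b)\in VY\times VZ$ lands in Case 1 of that proof. Hence, for every such pair, the fiberwise estimate $\sum_{v\in\pi^{-1}(a,b)}(\deg_{\widetilde{X}}(v)-2)\leq N(\deg_{\widetilde{Y}}(a)-2)(\deg_{\widetilde{Z}}(b)-2)$ holds, with $N=\max\{|G_{x}\cap HK|:x\in ET\}$ bounding each edge fiber. Summing over all pairs gives $\sum_{V\widetilde{X}}(\deg_{\widetilde{X}}(v)-2)\leq N\big(\sum_{V\widetilde{Y}}(\deg_{\widetilde{Y}}(a)-2)\big)\big(\sum_{V\widetilde{Z}}(\deg_{\widetilde{Z}}(b)-2)\big)$. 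Applying Lemma \ref{lem1} to each of the three sums, and using that each reduced complexity equals one half of the corresponding sum, the coefficient $N$ becomes $2N$, which yields $\overline{C}_{T}(H\cap K)\leq 2N\cdot\overline{C}_{T}(H)\cdot\overline{C}_{T}(K)$.

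I expect the only real obstacle to lie in the two verification steps rather than in the estimate: confirming that finite index forces infinite (hence non-degenerate) vertex stabilizers, and that $H\cap K$ cannot fix a vertex. Once these are in place the computation is merely the Case 1 specialization of Theorem \ref{ThmOne}, with no appeal to Cases 2 or 3, which are precisely the cases responsible for the larger constants coming from degenerate vertices of higher valence.
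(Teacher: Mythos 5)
Your main estimate is exactly the paper's proof: finite index together with infinite $G$-vertex stabilizers forces every $H$- and $K$-vertex stabilizer to be infinite, hence (edge stabilizers being finite) no vertex of $T_{H}/H$ or $T_{K}/K$ is degenerate, so only Case 1 of the proof of Theorem \ref{ThmOne} can occur; summing the Case 1 fiberwise inequality over all pairs $(a,b)$ and applying Lemma \ref{lem1} to the three sums converts the per-pair coefficient $N$ into the global coefficient $2N$. This is precisely the paper's (one-line) argument that Cases 2 and 3 do not occur.

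The one step that does not hold up is your ``verification'' that $H\cap K$ cannot fix a vertex. Your argument ends by deriving a contradiction with ``$G$ containing hyperbolic elements'', but no hypothesis of the corollary guarantees that $G$ contains any hyperbolic element: $G$ may act with a global fixed vertex (for instance $T$ a single vertex and $G$ infinite satisfies every hypothesis), and then $H\cap K$ really does fix a vertex, so the claim you set out to prove is false in general; your appeal to ``the infinite Bass--Serre tree'' is likewise unjustified. The repair is immediate and is what the paper implicitly relies on: if $H\cap K$ fixes a vertex, then by definition $C_{T}(H\cap K)=1$, so $\overline{C}_{T}(H\cap K)=0$ and the inequality holds trivially; Theorem \ref{ThmOne} is invoked only in the remaining case. (Moreover, when $G$ does contain a hyperbolic element $g$, there is a much shorter argument than your orbit argument: since $[G:H\cap K]<\infty$, some positive power $g^{n}$ lies in $H\cap K$, and $g^{n}$ is again hyperbolic, so $H\cap K$ cannot fix a vertex.) With that step patched, your proof is correct and coincides with the paper's.
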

\begin{proof}
If the $G$-stabilizer of every vertex is infinite and both $H$ and
$K$ are of finite index in $G$, then each vertex stabilizer is also
infinite under the action of $H$ or $K$ (being of finite index in
the corresponding $G$-stabilizer) and thus Cases 2 and 3 do not
occur.
\end{proof}

Following \cite{DI}, given a group $G$, we define
$a_{3}(G)=\min\big\{|\Gamma|\,:\,\Gamma \textrm{ is a subgroup of }
G \textrm{ with } |\Gamma|\geq 3 \big\}$ and
$\theta(G)=\Big\{\frac{a_{3}(G)}{a_{3}(G)-2}\Big\}\in [1,3]$, where
$\frac{\infty}{\infty-2}:=1$.

In the sequel, we prove that if $H,K$ act freely on the edges, then
the coefficient $6$ in the above theorem can be replaced by a number
$2\theta$, where $\theta\in [1,3]$, by imposing some extra
hypotheses on the structure of $G$.

Let $G_{i}$, $i\in I$, be a family of groups together with a group
$A$, let $\phi_{i}:A\longrightarrow G_{i}$ be a family of
monomorphisms and let $\ast_{A}G_{i}$ be the amalgamated free
product of $G_{i}$'s with amalgamated subgroup $A$ (with respect to
$\phi_{i}$). We can think of each $\phi_{i}$ as an inclusion. Let
$F$ be a free group and let $G=\ast_{A}G_{i}\ast F$ be the free
product of $F$ and $\ast_{A}G_{i}$. We construct a graph of groups
$(\mathcal{G},\Psi)$ with fundamental group $G$ as follows. The
graph $\Psi$ consists of a wedge of open edges
$e_{i}=[u_{0},u_{i}],i\in I$ (i.e. one for each factor $G_{i}$ and
distinct endpoints $u_{0}$ and $u_{i},i\in I$, $0\notin I$),
together with a wedge of loops $l_{j}$, one for each free generator
of $F$, attached at a vertex $u_{0}$ with vertex group $A$. To each
edge $e_{i}$ we associate the group $A$, to each loop $l_{j}$ we
associate the trivial group and to each vertex $u_{i}$ we associate
the group $G_{i}$. We denote by $T$ the corresponding universal
tree.

\firstThmTwo
\begin{proof}
We proceed as in the proof of Theorem \ref{ThmOne}. With the
notation of that proof, we have to prove that
\begin{equation}\label{ineq4}\sum_{i=1}^{n} \big(\deg_{\widetilde{X}}(v_{i})-2\big) \leq
\theta \cdot N \cdot \big(\deg_{\widetilde{Y}}(a) -2\big)\cdot
\big(\deg_{\widetilde{Z}}(b) -2\big) \end{equation}
 for any pair of
vertices $(a,b)\in Y\times Z$ (recall that $\{v_{1},\ldots, v_{n}\}$
denotes the vertices of $\pi^{-1}(a,b)$). Since $H$, $K$ act freely
on the edges of $T$, it follows, by Lemma \ref{lem2} (2), that we
can use $1$ instead of $M$. Suppose first that at least one of $a$
and $b$ is non-degenerate. Then the arguments of Cases 1, 2 of the
proof of Theorem \ref{ThmOne} apply to show that
\begin{equation}\label{ineq6}\sum_{i=1}^{n} \big(\deg_{\widetilde{X}}(v_{i})-2\big) \leq
N \cdot \big(\deg_{\widetilde{Y}}(a) -2\big)\cdot
\big(\deg_{\widetilde{Z}}(b) -2\big) \end{equation} and inequality
\ref{ineq4} holds. Thus it suffices to consider the case where $a$
is $H$-degenerate and $b$ is $K$-degenerate (i.e. Case 3 in the
proof of Theorem \ref{ThmOne}). In this case we have $\deg_{Y}(a)
=\deg_{\widetilde{Y}}(a)$, $\deg_{Z}(b) = \deg_{\widetilde{Z}}(b)$
and  $\deg_{X}(v_{i}) = \deg_{\widetilde{X}}(v_{i})$, while by Lemma
\ref{lem2} (2), $\deg_{X}(v_{i})\leq \min\{\deg_{Y}(a),\deg_{Z}(b)
\}$. For each $i\in\{1,\ldots, n\}$, choose a vertex, $w_{i}$, of
$T$, so that $[w_{i}]_{H\cap K}=v_{i}\in \pi^{-1}(a,b)$. Note that
all $w_{1},\ldots,w_{n}$ lie in the same $G$-orbit. There are two
subcases to consider.

(i) $w_{i}$ and $u_{0}$ are in the same $G$-orbit, i.e.
$w_{i}=g_{i}u_{0}$ for some $g_{i}\in G$. Notice that
$H_{w_{i}}=K_{w_{i}}=g_{i}Ag_{i}^{-1}\cap H=g_{i}Ag_{i}^{-1}\cap
K=1$. If one of the vertices $a$ or $b$ has valence $2$, then (each)
$v_{i}$ has valence $2$ as well and inequality \ref{ineq4} is
obvious. If both $a$ and $b$ have valence at least $3$, then
\begin{align}\sum_{i=1}^{n} \big(\deg_{\widetilde{X}}(v_{i})-2\big)& \leq
n\cdot \big(\deg_{\widetilde{Y}}(a) -2\big)\leq |H_{w_{i}}\backslash
G_{w_{i}}\cap HK / K_{w_{i}}| \big(\deg_{\widetilde{Y}}(a)
-2\big)\cdot (3 -2)\nonumber \\
 &\leq N\cdot
\big(\deg_{\widetilde{Y}}(a) -2\big)\cdot
\big(\deg_{\widetilde{Z}}(b) -2\big)\leq \theta(G)\cdot N\cdot
\big(\deg_{\widetilde{Y}}(a) -2\big)\cdot
\big(\deg_{\widetilde{Z}}(b) -2\big).\nonumber\end{align}

(ii) $w_{i}$ and $u_{j}$ are in the same $G$-orbit for some $j\in
I$, i.e. there exists $g_{i}\in G$ such that $w_{i}=g_{i}u_{j}$. As
before, we may assume that both $a$ and $b$ have valence at least
$3$.

If $n\leq N$, then
\begin{align}
\sum_{i=1}^{n} \big(\deg_{\widetilde{X}}(v_{i})-2\big)\leq N\cdot
\big(\deg_{\widetilde{Y}}(a) -2\big)\cdot (3-2)\leq N\cdot
\big(\deg_{\widetilde{Y}}(a) -2\big)\cdot
\big(\deg_{\widetilde{Z}}(b) -2\big). \nonumber\end{align}

Suppose now that $n>N$. Let $\mathcal{R}=\{g_{\lambda}\}_{\lambda\in
\Lambda}$ be a set of representatives of left cosets of $A$ in
$G_{j}$. From the construction of $X=T/G$, the stars of different
vertices of $\pi^{-1}(a,b)$ are disjoint, while each edge in the
star of $[g_{i}u_{j}]_{\Gamma}$, where $\Gamma=H\cap K,H \textrm{ or
} K$, is of the form $[xe_{j}]_{\Gamma}$ and its terminal vertex is
$[g_{i}u_{j}]_{\Gamma}$. It follows that there is $\gamma\in \Gamma$
such that $\gamma x u_{j}=g_{i}u_{j}$ and thus $g_{i}^{-1}\gamma
x\in G_{j}$. If we write $g_{i}^{-1}\gamma x$ as $g_{\lambda(x)}a$,
where $g_{\lambda(x)}\in \mathcal{R}$ and $a\in A$, then
$[xe_{j}]_{\Gamma}=[\gamma^{-1}g_{i}g_{\lambda(x)}a
e_{j}]_{\Gamma}=[g_{i}g_{\lambda(x)}a
e_{j}]_{\Gamma}=[g_{i}g_{\lambda(x)} e_{j}]_{\Gamma}$. It follows
that there exists a subset $\mathcal{R}^{i}_{\Gamma}$ of
$\mathcal{R}$ such that
$Star([w_{i}]_{\Gamma})=\big\{[g_{i}g_{\lambda}
e_{j}]_{\Gamma}\,:\,g_{\lambda}\in\mathcal{R}^{i}_{\Gamma}\big\}$
and $|\mathcal{R}^{i}_{\Gamma}|=|Star([w_{i}]_{\Gamma})|$. In
particular, $|\mathcal{R}^{i}_{H}|=|Star_{Y}(a)|$ and
$|\mathcal{R}^{i}_{K}|=|Star_{Z}(b)|$.

Fix $i\in \{1,\ldots,n\}$. For any $k\in\{1,\ldots,n\}$, let $C_{k}$
be the subset of $\mathcal{R}_{H}\times
\mathcal{R}_{K}:=\mathcal{R}^{i}_{H}\times \mathcal{R}^{i}_{K}$
consisting of all pairs $(g_{\lambda},g_{\mu})$ such that
$\big([g_{i}g_{\lambda} e_{j}]_{H},[g_{i}g_{\mu} e_{j}]_{K}\big)$ is
the image under $\pi$ of some edge $[xe_{j}]_{H\cap K}$ in the star
of $[g_{k}u_{j}]_{H\cap K}=v_{k}$ in $X$. Let $\phi:G_{j}\rightarrow
G_{j}/A$ denote the natural epimorphism. Note that the restriction
of $\phi$ on $\mathcal{R}$ is a bijection.

We will show that
$\phi(C_{k})=\big\{\big(\phi(g_{\lambda}),\phi(g_{\mu})\big):\,
(g_{\lambda},g_{\mu})\in C_{k}\big\}$ is a \textsl{single-quotient}
subset of $\phi(\mathcal{R}_{H})\times \phi(\mathcal{R}_{K})$, in
the terminology of \cite{DI}, i.e. that the product
$\phi(g_{\lambda})\cdot \phi(g_{\mu})^{-1}$ is constant for all
pairs $(g_{\lambda},g_{\mu})\in C_{k}$. Suppose that $y_{1}$ and
$y_{2}$ are edges in the star of $v_{k}$ and that
$\pi(y_{t})=\big([g_{i}g_{\lambda(t)} e_{j}]_{H},[g_{i}g_{\mu(t)}
e_{j}]_{K}\big)$, $t=1,2$. We want to show that
$\phi(g_{\lambda(1)})\cdot
\phi(g_{\mu(1)})^{-1}=\phi(g_{\lambda(2)})\cdot
\phi(g_{\mu(2)})^{-1}$. From the above analysis, we can write
$y_{t}=[g_{k}g_{s(t)} e_{j}]_{H\cap K}$ for some
$g_{s(t)}\in\mathcal{R}^{k}_{H\cap K}$, $t=1,2$, and thus
$\pi(y_{t})=\big([g_{k}g_{s(t)} e_{j}]_{H},[g_{k}g_{s(t)} e_{j}]_{
K}\big)$. It follows that $\big([g_{k}g_{s(1)}
e_{j}]_{H},[g_{k}g_{s(1)} e_{j}]_{ K}\big)=\big([g_{i}g_{\lambda(1)}
e_{j}]_{H},[g_{i}g_{\mu(1)} e_{j}]_{K}\big)$, and that
$\big([g_{k}g_{s(2)} e_{j}]_{H},[g_{k}g_{s(2)} e_{j}]_{
K}\big)=\big([g_{i}g_{\lambda(2)} e_{j}]_{H},[g_{i}g_{\mu(2)}
e_{j}]_{K}\big)$. Hence there are $h_{1},h_{2}\in H$,
$k_{1},k_{2}\in K$ and $a_{1},a_{2},a_{3},a_{4}\in A$ such that
$$\begin{array}{ll}
 g_{k}g_{s(1)}=h_{1}g_{i}g_{\lambda(1)}a_{1}, & g_{k}g_{s(2)}=h_{2} g_{i}g_{\lambda(2)}a_{3} \\
 g_{k}g_{s(1)}=k_{1}g_{i}g_{\mu(1)}a_{2}, & g_{k}g_{s(2)}=k_{2}g_{i}g_{\mu(2)}a_{4}   \\
\end{array}$$
By normality of $A$ in $G_{j}$, the stabilizer of any edge in the
star of $w_{i}$ is equal to $g_{i}Ag_{i}^{-1}$. Therefore our
assumption that $w_{i}$ is $H,K$ degenerate implies that $H\cap
G_{w_{i}}=H\cap g_{i}Ag_{i}^{-1}$ and $K\cap G_{w_{i}}=K\cap
g_{i}Ag_{i}^{-1}$. Now, from the first two equalities above we
deduce that
\begin{equation}\label{eq5} h_{2}^{-1}h_{1}=
g_{i}g_{\lambda(2)}a_{3}g_{s(2)}^{-1}g_{s(1)}a_{1}^{-1}g_{\lambda(1)}^{-1}g_{i}^{-1}\in
H\cap g_{i}G_{j}g_{i}^{-1}=H\cap G_{w_{i}}=H\cap
g_{i}Ag_{i}^{-1}=1,\end{equation} while from the last two
\begin{equation}\label{eq6} k_{2}^{-1}k_{1}=g_{i}g_{\mu(2)}a_{4}g_{s(2)}^{-1}g_{s(1)}a_{2}^{-1}g_{\mu(1)}^{-1}g_{i}^{-1}\in
K\cap g_{i}G_{j}g_{i}^{-1}=K\cap G_{w_{i}}=K\cap
g_{i}Ag_{i}^{-1}=1.\end{equation} The above relations imply that
$g_{\lambda(2)}a_{3}g_{s(2)}^{-1}g_{s(1)}a_{1}^{-1}g_{\lambda(1)}^{-1}=1$
and
$g_{\mu(2)}a_{4}g_{s(2)}^{-1}g_{s(1)}a_{2}^{-1}g_{\mu(1)}^{-1}=1$.
Thus,
$g_{\lambda(1)}g_{\mu(1)}^{-1}=g_{\lambda(2)}a_{3}g_{s(2)}^{-1}g_{s(1)}a_{1}^{-1}
a_{2}g_{s(1)}^{-1}g_{s(2)}a_{4}^{-1}g_{\mu(2)}^{-1}$, from which it
follows that $\phi(g_{\lambda(1)})\cdot
\phi(g_{\mu(1)})^{-1}=\phi(g_{\lambda(2)})\cdot
\phi(g_{\mu(2)})^{-1}$.

Our aim is to apply \cite[Corollary 3.5]{DI}, which requires
pairwise-disjoint, single-quotient subsets. Note that if the
intersection $C_{k}\cap C_{s}$ is nonempty, then there are edges
$y_{1}$ and $y_{2}$ in $Star_{X}(v_{k})$ and $Star_{X}(v_{s})$,
respectively, such that $\pi(y_{1})=\pi(y_{2})$. Thus, for each
$k=1,\ldots,n$, we choose a subset $F_{k}$ of $Star_{X}(v_{k})$ with
$|F_{1}|+\cdots+|F_{n}|$ maximum such that the restriction of $\pi$
on the union $\cup_{k=1}^{n}F_{k}$ is an injection. In particular,
they are pairwise-disjoint. Since the inverse image of any edge of
$Y\times Z$ under $\pi$ contains at most $N$ elements, we have
$|Star_{X}(v_{1})|+\cdots +|Star_{X}(v_{n})|\leq N
\big(|F_{1}|+\cdots+|F_{n}|\big)$. If $C_{F(k)}$ denotes the subset
of $C_{k}$ corresponding to edges of $F_{k}$, then
$C_{F(1)},\ldots,C_{F(n)}$ are pairwise-disjoint. It follows that
$\phi(C_{F(1)}),\ldots,\phi(C_{F(n)})$ are pairwise-disjoint,
single-quotient subsets of $\phi(\mathcal{R}_{H})\times
\phi(\mathcal{R}_{K})$ and \cite[Corollary 3.5]{DI} applies to show
that
$$\sum_{k=1}^{n}\big(|\phi(C_{F(k)})|-2\big)\leq
\theta(G_{j}/A)\cdot \big(|\phi(\mathcal{R}_{H})|-2 \big)\cdot
\big(|\phi(\mathcal{R}_{K})|-2 \big).$$ Finally
\begin{align}\sum_{k=1}^{n}
\big(\deg_{\widetilde{X}}(v_{k})-2\big)&= \sum_{k=1}^{n}
\big(\deg_{X}(v_{k})-2\big)= \sum_{k=1}^{n} |Star_{X}(v_{k})|-2n
\leq N\cdot \sum_{k=1}^{n} |F_{k}|-2N \nonumber \\
&= N\cdot \sum_{k=1}^{n} \big(|C_{F(k)}|-2\big)=N\cdot
\sum_{k=1}^{n} \big(|\phi(C_{F(k)})|-2\big)\nonumber
\\
& \leq N\cdot \theta(G_{j}/A)\cdot \big(|\phi(\mathcal{R}_{H})|-2
\big)\cdot \big(|\phi(\mathcal{R}_{K})|-2 \big)\nonumber \\
& =N\cdot \theta(G_{j}/A)\cdot \big(|\mathcal{R}_{H}|-2 \big)\cdot
\big(|\mathcal{R}_{K}|-2 \big) \nonumber \\
 &\leq  N \cdot \theta \cdot
\big(\deg_{\widetilde{Y}}(a) -2\big)\cdot
\big(\deg_{\widetilde{Z}}(b) -2\big).\nonumber\end{align} This
completes the proof.
\end{proof}
\begin{rem}\label{rem2}
The analogous theorem with the same proof is valid for fundamental
groups of graphs of groups $(\mathcal{G},\Psi)$ defined as follows.
The subject graph $\Psi$ is the same as the one defined previously
(prior to Theorem \ref{thm2}). To the terminal vertex $u_{i}$ of
$e_{i}$ we associate the group $G_{i}$, to the common initial vertex
of $e_{i}$'s we associate the finite group $A$, and to each open
edge $e_{i}$ we associate a subgroup $A_{i}$ of $A$ normally
embedded in $G_{i}$ such that $A_{i_{0}}=A$ for some $i_{0}$ (this
means that the ``central" vertex is $G$-degenerate and thus
$G_{w_{i}}=gAg^{-1}$ in Case (i) of the proof). To each loop we
associate the trivial group. We need normality of $A_{i}$ in $G_{i}$
in order to make the natural map $G_{i}\rightarrow G_{i}/A_{i}$ a
homomorphism (and thus the same arguments in Case (ii) work equally
well to this more general setting).
\end{rem}

As a corollary, we obtain the main result of Ivanov in \cite{Iv} (in
fact our proof can be slightly modified to generalize \cite[Theorem
6.3]{DI} as well).

\begin{cor}
Suppose that $H_{1}$, $H_{2}$ are subgroups of a free product
$G=\ast_{a\in I}G_{a}$ and $H_{1}$, $H_{2}$ have finite Kurosh rank
$K(H_{1})$, $K(H_{2})$. Then the intersection $H_{1}\cap H_{2}$ also
has finite Kurosh rank and
$$\overline{K}r(H_{1}\cap H_{2})\leq 2\cdot \theta(G) \cdot \overline{K}r(H_{1})\cdot \overline{K}r(H_{2}). $$
In particular, if $G$ is torsion-free (or more generally, every
finite subgroup of $G$ has order at most $2$), then
$$\overline{K}r(H_{1}\cap H_{2}) \leq 2\cdot \overline{K}r(H_{1})\cdot
\overline{K}r(H_{2}). $$
\end{cor}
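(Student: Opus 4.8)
The plan is to derive this as the special case $A=1$ of Theorem \ref{thm2}. A free product $G=\ast_{a\in I}G_{a}$ is literally of the form $\ast_{A}G_{i}\ast F$ with trivial amalgamated subgroup $A=1$ (take, say, $F=1$ and $\{G_{i}\}=\{G_{a}\}$); the trivial subgroup is of course finite and normal in every factor, so the structural hypotheses of Theorem \ref{thm2} on the ambient group are satisfied. Taking $T$ to be the associated universal tree (the Bass--Serre tree of the free product), the edge stabilizers are the conjugates of $A=1$ and hence are trivial, so \emph{every} subgroup of $G$ --- in particular $H_{1}$ and $H_{2}$ --- acts freely on the edges of $T$.

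Next I would dispose of the tameness and finiteness bookkeeping. Since $H_{1}$ and $H_{2}$ act edge-freely and have finite Kurosh rank, which coincides with $K_{T}(H_{i})$ by the discussion in Section \ref{prelim}, Lemma \ref{lem0} shows that $H_{1}$ and $H_{2}$ are tame. Theorem \ref{thm2} then applies and yields that $H_{1}\cap H_{2}$ is tame, of finite Kurosh rank, with
\[
\overline{K}r(H_{1}\cap H_{2})=\overline{K}_{T}(H_{1}\cap H_{2})\leq 2\cdot \theta\cdot N\cdot \overline{K}_{T}(H_{1})\cdot \overline{K}_{T}(H_{2}),
\]
where $N=\max\{|gAg^{-1}\cap H_{1}H_{2}|:g\in G\}$ and $\theta=\max\{\theta(G_{i}/A):i\in I\}$. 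It remains only to evaluate the two constants when $A=1$.

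The value of $N$ is immediate: since $A=1$, each $gAg^{-1}$ is trivial, so $N=1$. For $\theta$, with $A=1$ we have $G_{i}/A=G_{i}$, hence $\theta=\max_{i}\theta(G_{i})$, and the crux is to identify this with $\theta(G)$. This is where the only genuine argument lies. By the Kurosh subgroup theorem every finite subgroup of the free product $G$ is conjugate into one of the factors $G_{a}$, so a finite subgroup of order at least $3$ exists in $G$ precisely when one exists in some factor, giving $a_{3}(G)=\min_{a}a_{3}(G_{a})$. Since $x\mapsto x/(x-2)$ is decreasing on $[3,\infty]$, this minimum translates into $\theta(G)=\max_{a}\theta(G_{a})=\theta$. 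Substituting $N=1$ and $\theta=\theta(G)$ gives the asserted inequality $\overline{K}r(H_{1}\cap H_{2})\leq 2\,\theta(G)\,\overline{K}r(H_{1})\,\overline{K}r(H_{2})$.

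Finally, the ``in particular'' clause is a one-line specialization: if $G$ is torsion-free, or more generally if every finite subgroup of $G$ has order at most $2$, then $G$ has no subgroup of order at least $3$, so $a_{3}(G)=\infty$ and $\theta(G)=\frac{\infty}{\infty-2}=1$, whence the coefficient reduces to $2$. The main --- indeed essentially the only nontrivial --- obstacle in the whole argument is the identification $\theta=\theta(G)$, which rests on the Kurosh-theoretic fact that finite subgroups of a free product are subconjugate to the factors; everything else is a matter of specializing Theorem \ref{thm2} and matching notation.
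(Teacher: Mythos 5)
Your proposal is correct and follows essentially the same route as the paper: the paper's own (two-line) proof likewise deduces the corollary from Theorem \ref{thm2} with $A=1$, invoking Lemma \ref{lem0} and the identification $Kr(\cdot)=K_{T}(\cdot)$ to get tameness and finiteness, with $N=1$ and $\theta=\max_{a}\theta(G_{a})$. The only remark worth adding is that what you call the crux is in fact easier than you suggest: the stated inequality only needs the trivial direction $\max_{a}\theta(G_{a})\leq\theta(G)$ (each $G_{a}$ is a subgroup of $G$, so $a_{3}(G)\leq a_{3}(G_{a})$), and the Kurosh-theoretic equality is not required.
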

\begin{proof}
By Lemma \ref{lem0} and the comments preceding it, the subgroup
Kurosh rank is equal to the Kurosh rank with respect to $T$ (i.e.
$Kr(\cdot)=K_{T}(\cdot)$, where $T$ is as above) and finite Kurosh
rank implies tameness.
\end{proof}

In the case of free products with a finite, normal subgroup
amalgamated, we can use the same arguments to improve the bound for
the complexity of the intersection of tame subgroups.

Let $G_{i}$, $i\in I$, be a family of groups together with a group
$A$ and let $G=\ast_{A}G_{i}$ be the amalgamated free product of
$G_{i}$'s with amalgamated subgroup $A$ (with respect to a family of
monomorphisms, regarded as inclusions). We construct a tree of
groups $(\mathcal{G},T_{0})$ with fundamental group $G$ as usual.
The tree $T_{0}$ consists of a wedge of open edges
$e_{i}=[u_{0},u_{i}],i\in I$ (one for each factor $G_{i}$) attached
at a vertex $v_{0}$ (where $0\notin I$) with vertex group $A$. To
each edge we associate the group $A$ and to each vertex $v_{i}$ we
associate the group $G_{i}$. We denote by $T$ the corresponding
universal tree.

\begin{thm}\label{thm3} Let $G=\ast_{A}G_{i}$ be the amalgamated free product of $G_{i}$'s
with a finite and normal amalgamated subgroup $A$. We consider the
action of $G$ on $T$ defined above. If $H$ and $K$ are tame
subgroups (with respect to $T$) of $G$, then $H\cap K$ is tame and
$$\overline{C}_{T}(H\cap K)\leq 2\cdot \theta \cdot |A\cap HK|\cdot\overline{C}_{T}(H)\cdot
\overline{C}_{T}(K),$$ where $\theta=\max\{\theta(G_{i}/A)\,:\,i\in
I\}$.
\end{thm}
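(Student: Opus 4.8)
The plan is to imitate the proof of Theorem~\ref{thm2} almost verbatim, the essential new point being that the normality hypothesis forces every edge stabilizer to coincide with $A$, which simultaneously fixes the value of $N$ and supplies, for free, the degeneracy hypothesis that was previously guaranteed by edge-freeness. First I would record the structural consequences. Since $A$ is normal in each $G_i$ and $G=\ast_A G_i$, every element of $G$ is a product of elements each normalizing $A$, so $A$ is normal in $G$; consequently the stabilizer of every edge of $T$ equals $A$. As $A$ is finite, \cite[Theorem~2.13]{Sy} gives that $H\cap K$ is tame, and $N=\max\{|G_x\cap HK|:x\in ET\}=|A\cap HK|$. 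Moreover, if $v$ is any $H$-degenerate (resp. $K$-degenerate) vertex, then $H_v=H\cap A$ (resp. $K_v=K\cap A$) lies in the common edge stabilizer $A$ and hence fixes every edge in the star of $v$; thus the extra hypothesis of Theorem~\ref{ThmOne}(2) holds automatically, and Lemma~\ref{lem2}(2) lets us use $1$ in place of $M$ at every degenerate vertex. If $H\cap K$ fixes a vertex the inequality is trivial, so by Remark~\ref{rem1} I may assume that $H\cap K$, $H$ and $K$ all contain hyperbolic elements.

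With $X=T_{H\cap K}/(H\cap K)$, $Y=T_H/H$, $Z=T_K/K$ and $\pi=(\pi_H,\pi_K)\colon X\to Y\times Z$ as before, Lemma~\ref{lem1} reduces the statement to the per-fibre inequality
$$\sum_{v\in\pi^{-1}(a,b)}\big(\deg_{\widetilde X}(v)-2\big)\le \theta N\,\big(\deg_{\widetilde Y}(a)-2\big)\big(\deg_{\widetilde Z}(b)-2\big)$$
for each pair $(a,b)\in Y\times Z$, the coefficient $2$ in the theorem arising from the $\tfrac12$ in Lemma~\ref{lem1}. When at least one of $a,b$ is non-degenerate, Cases~1 and~2 of the proof of Theorem~\ref{ThmOne}, now with $M=1$, give the stronger bound with coefficient $N\le\theta N$. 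Thus the whole content lies in the case where both $a$ and $b$ are degenerate.

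In that case I would lift the $v_i$ to vertices $w_i$ of $T$, all in a single $G$-orbit, and split into the two subcases of Theorem~\ref{thm2}. If the $w_i$ lie in the orbit of the central vertex $u_0$, then $G_{w_i}=A$, so the fibre over $(a,b)$ has $n=|H_{w_i}\backslash A\cap HK/K_{w_i}|\le|A\cap HK|=N$ vertices; combining this with $\deg_X(v_i)\le\min\{\deg_Y(a),\deg_Z(b)\}$ (Lemma~\ref{lem2}(2)) yields the bound exactly as in subcase~(i) there. If instead the $w_i$ lie in the orbit of a vertex $u_j$, I would run the Dicks--Ivanov single-quotient argument: parametrize the stars by left-coset representatives $\mathcal R$ of $A$ in $G_j$, form the sets $C_k\subseteq\mathcal R_H\times\mathcal R_K$, select subsets $F_k$ on whose union $\pi$ is injective, and apply \cite[Corollary~3.5]{DI} to the pairwise-disjoint single-quotient images $\phi(C_{F(k)})$ under $\phi\colon G_j\to G_j/A$.

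The one place where edge-freeness was genuinely used, and hence the main obstacle, is the verification that each $\phi(C_k)$ is single-quotient. In Theorem~\ref{thm2} this rested on $H\cap g_iAg_i^{-1}=1$. Here that intersection need not be trivial, but degeneracy together with normality gives $H\cap g_iG_jg_i^{-1}=H\cap A$ and $K\cap g_iG_jg_i^{-1}=K\cap A$, so the elements $h_2^{-1}h_1$ and $k_2^{-1}k_1$ produced by a coincidence of two $\pi$-images now lie in $A$ rather than being trivial. Conjugating by $g_i$ keeps them in $A$ by normality, whence the words $g_{\lambda(2)}a_3g_{s(2)}^{-1}g_{s(1)}a_1^{-1}g_{\lambda(1)}^{-1}$ and $g_{\mu(2)}a_4g_{s(2)}^{-1}g_{s(1)}a_2^{-1}g_{\mu(1)}^{-1}$ lie in $A$; applying the homomorphism $\phi$, which kills $A$, collapses all these contributions and yields $\phi(g_{\lambda(1)})\phi(g_{\mu(1)})^{-1}=\phi(g_{\lambda(2)})\phi(g_{\mu(2)})^{-1}$, which is precisely the single-quotient property. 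From here the disjointness of the $C_{F(k)}$, the application of \cite[Corollary~3.5]{DI}, and the final estimate translating $|\phi(\mathcal R_H)|,|\phi(\mathcal R_K)|$ back into $\deg_{\widetilde Y}(a),\deg_{\widetilde Z}(b)$ are identical to Theorem~\ref{thm2}, producing the coefficient $\theta N$ and hence the asserted bound $2\theta\,|A\cap HK|$.
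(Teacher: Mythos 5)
Your proposal is correct and follows essentially the same route as the paper: it reduces to the proof of Theorem~\ref{thm2}, notes that normality forces every edge stabilizer to equal $A$ (so $N=|A\cap HK|$ and Lemma~\ref{lem2}(2) applies with $1$ in place of $M$), and observes that in the single-quotient verification the elements of (\ref{eq5})--(\ref{eq6}) now lie in $A=\ker\phi$ rather than being trivial, which still yields $\phi(g_{\lambda(1)})\phi(g_{\mu(1)})^{-1}=\phi(g_{\lambda(2)})\phi(g_{\mu(2)})^{-1}$. These are precisely the two points (a) and (b) recorded in the paper's proof.
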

\begin{proof} The proof is exactly the same as the proof of Theorem \ref{thm2}.
There are two things to note:
\begin{itemize}
\item[(a)] The normality of $A$ in $G$ and the fact that $v_{0}$
is a $G$-degenerate vertex imply that for each subgroup $B$ of $G$
the $B$-stabilizer of the star of any $B$-degenerate vertex $v$ of
$T$ is equal to $B_{v}$ and therefore Lemma \ref{lem2} (2) applies
(i.e we can again use $1$ instead of $M$ to obtain inequality
\ref{ineq6}).
\item[(b)] Using the notation of the proof of Theorem \ref{thm2},
the relations \ref{eq5} and \ref{eq6} now give
$g_{\lambda(2)}a_{3}g_{s(2)}^{-1}g_{s(1)}a_{1}^{-1}g_{\lambda(1)}^{-1}\in
A$ and
$g_{\mu(2)}a_{4}g_{s(2)}^{-1}g_{s(1)}a_{2}^{-1}g_{\mu(1)}^{-1}\in
A$. Since $A$ is the kernel of $\phi$, we again conclude that
$\phi(g_{\lambda(1)})\cdot
\phi(g_{\mu(1)})^{-1}=\phi(g_{\lambda(2)})\cdot
\phi(g_{\mu(2)})^{-1}$.
 \end{itemize}\end{proof}

\begin{rem}
In general, there are examples (see \cite{Iv,Za1}) showing that the
bounds obtained in the previous two theorems are sharp.
\end{rem}

\noindent
Department of Mathematics\\
National and Kapodistrian University of Athens\\
Panepistimioupolis, GR-157 84, Athens, Greece\\
{\it e-mail}: klentzos@math.uoa.gr\\
{\it e-mail}: msykiot@math.uoa.gr

\end{document}